\numberwithin{equation}{section}
\theoremstyle{plain}
\newtheorem*{acknowledgements}{Acknowledgements}
\newtheorem{theorem}{\bf Theorem}
\newtheorem{prop}{\bf Proposition}
\newtheorem{oldassertion}{\bf Theorem}
\newtheorem{oldlemma}{\bf Lemma}
\begin{document}

\title{JOHN-NIRENBERG INEQUALITY FOR RIEMANN TYPE SERIES}
\author{Kristina Oganesyan}
\address{Lomonosov Moscow State University, Moscow Center for fundamental and applied mathematics, Centre de Recerca Matem\`atica, Universitat Aut\`onoma de Barcelona}
\email{oganchris@gmail.com}
\thanks{The work was supported by the Moebius Contest Foundation for Young Scientists and the Foundation for the Advancement of Theoretical Physics and Mathematics “BASIS” (grant no 19-8-2-28-1).}
\date{}

\selectlanguage{english}

\begin{abstract} 
We obtain an improvement of the John-Nirenberg inequality for the series of the form $\sum_{n=1}^{\infty}n^{-1}e^{2\pi i n^k x},\;k>2,$ on intervals consisting of points of a same convergent of their continued fractions. We also establish a convergence criterion for these series. 
\end{abstract}
\keywords{Continued fractions, exponential sums, BMO space, John-Nirenberg inequality.}
	\maketitle

\section{Introduction}
We consider Riemann type series, i.e. the series 
\begin{align}\label{1}
    F_k:=\sum_{n=1}^{\infty}\frac{e(n^k x)}{n}:=\sum_{n=1}^{\infty}\frac{e^{2\pi i n^k x}}{n}
\end{align}
for $k\in\mathbb{N}$.

Studying of the series of the form $\sum_{n=1}^{\infty}n^{-s}e(n^k x)$ began with the so-called $``$Riemann's example$"$. Riemann is reported to assume that the imaginary part of the series for $k=s=2$ represents a continuous nowhere differentiable function (see, for example, \cite{G}). Although it was proved that the function is not differentiable at any irrational point and at a family of rational ones but is differentiable at an infinite number of rationals \cite{H,G}, it turned out that the Riemann's example possesses some curious and important properties. In particular, it is a multifractal function \cite{J}. In \cite{ChU}, the multifractal nature was proved for the series
$\sum_{n=1}^{\infty}n^{-s}e(P_k(n) x)$, where $P_k\in\mathbb{Z}[x],\;\deg P_k=k,\;1+k/2<s<k$.

In the case $s=1$, which is the one we will deal with, there is no local boundness, therefore, the analytic methods mentioned before do not work. Nevertheless, it is crucial to have an idea of how far such series are from being bounded.

It is easy to see, using the Abel transformation, that series \eqref{1} is divergent at a rational point $a/q$ if and only if $\sum_{n=1}^q e(\frac{a}{q}n^k)\neq 0$. In particular, the series diverges at any point $a/p^k,\;p\in\mathbb{P},\;\gcd(a,p)=\gcd(k,p)=1$, since $\sum_{n=1}^q e(\frac{a}{p^k}n^k)=p^{k-1}$. As for irrational points, we show that there holds the following theorem, which represents a generalization of \cite[Th. 2]{ChCU}.

\begin{theorem}\label{thm2} Let $1<k\in\mathbb{N}, \;x\in[0,1)\setminus\mathbb{Q}$ and $\{p_j/q_j\}$ be the convergents corresponding to the continued fraction of the number $x$. Denote $$\xi_{p/q}=\xi_{p/q}^k:=\sum_{t=0}^{q-1}e\big(t^k\frac{p}{q}\big).$$
Then series \eqref{1} converges if and only if
\begin{align*}
\sum_{i=1}^{\infty}\frac{\xi_{p_i/q_i}}{k q_i}\ln \frac{q_{i+1}}{q_i}
\end{align*}
does, and the difference between this sum and $F_k$ is uniformly bounded in $x$. 
\end{theorem}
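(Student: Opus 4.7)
The plan is to partition $\mathbb{N}$ into blocks $B_i$ adapted to the continued fraction of $x$ and match each block's contribution to the $i$-th summand of the target series. The form of the logarithmic factor $\frac{1}{k}\ln(q_{i+1}/q_i)$ suggests taking
\[
B_i := \{n \in \mathbb{N} : q_i^{1/k} < n \le q_{i+1}^{1/k}\},
\]
so that the substitution $m = n^k$ maps $B_i$ onto $(q_i, q_{i+1}]$ and $\sum_{n \in B_i} 1/n = \frac{1}{k}\ln(q_{i+1}/q_i) + O(q_i^{-1/k})$. On $B_i$ the convergent $p_i/q_i$ is the natural approximation to $x$: indeed $|x - p_i/q_i| \le 1/(q_i q_{i+1})$ together with $n^k \le q_{i+1}$ yields $n^k|x - p_i/q_i| \le 1/q_i$, so $e(n^k x)$ stays close to $e(n^k p_i/q_i)$ throughout the block.

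The first step is to show that replacing $x$ by $p_i/q_i$ on each block produces only a uniformly bounded total error. From $|e(n^k x) - e(n^k p_i/q_i)| \le 2\pi n^k|x - p_i/q_i|$ one gets
\[
\sum_{n \in B_i}\frac{|e(n^k x) - e(n^k p_i/q_i)|}{n} \le \frac{2\pi}{q_i q_{i+1}} \sum_{n \le q_{i+1}^{1/k}} n^{k-1} \ll \frac{1}{q_i},
\]
which is summable in $i$ since the $q_i$ grow at least as fast as the Fibonacci sequence, and the bound is uniform in $x$. It remains to evaluate $\sum_{n \in B_i} e(n^k p_i/q_i)/n$ for each $i$.

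The heart of the proof is the block identity
\[
\sum_{n \in B_i} \frac{e(n^k p_i/q_i)}{n} = \frac{\xi_{p_i/q_i}}{k q_i}\ln\frac{q_{i+1}}{q_i} + E_i
\]
with $\sum_i |E_i|$ finite. Abel summation reduces this to understanding $\Sigma(N) := \sum_{n \le N} e(n^k p_i/q_i)$ for $N \in B_i$: periodicity gives $\Sigma(m q_i) = m\,\xi_{p_i/q_i}$ exactly, and the heuristic $\Sigma(N) \approx (N/q_i)\,\xi_{p_i/q_i}$, inserted into Abel's transformation together with the logarithmic estimate on $\sum_{n \in B_i} 1/n$, produces the desired main term. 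Summing the block identities over $i$ then presents $F_k$ minus the target series as a uniformly bounded quantity, simultaneously giving the convergence criterion and the boundedness claim.

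The principal obstacle lies in this block evaluation: when the partial quotient $q_{i+1}/q_i$ is only moderate, $B_i$ has length $\asymp q_{i+1}^{1/k}$, which can be much smaller than the period $q_i$ of $e(n^k p_i/q_i)$, so the naive complete-period averaging does not apply. One must instead establish quantitative equidistribution of $n^k \pmod{q_i}$ in short intervals, i.e.\ sharp bounds for incomplete exponential sums with a polynomial phase. For $k=2$ this is the classical theory of incomplete Gauss sums used in \cite[Th.~2]{ChCU}; for $k > 2$ it rests on Weyl-type inequalities whose quality degrades with $k$, and the careful bookkeeping of the resulting errors as $i$ varies is the technical core of the argument.
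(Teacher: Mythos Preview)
Your decomposition into blocks $B_i=(q_i^{1/k},q_{i+1}^{1/k}]$ and the replacement of $x$ by $p_i/q_i$ on $B_i$ are both legitimate, and the error $\sum_i O(1/q_i)$ from that replacement is indeed summable. The genuine gap is exactly the step you flag as ``the principal obstacle'' and then do not carry out. After passing to the rational phase you need, via Abel summation, control of
\[
R_i(N):=\sum_{n\le N}e\Big(n^k\frac{p_i}{q_i}\Big)-\frac{N}{q_i}\,\xi_{p_i/q_i}
\qquad\text{for } q_i^{1/k}<N\le q_{i+1}^{1/k},
\]
with $\sum_{n\in B_i}|R_i(n)|/n^2$ summable over $i$. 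But $R_i(N)$ is an incomplete $k$-th power sum of length $<q_i$ modulo $q_i$; the only general bound is the trivial $|R_i(N)|\le q_i$, which feeds $\sum_{n\in B_i}q_i/n^2\asymp q_i^{1-1/k}$ into the error and diverges. For $k\ge 3$ the best available nontrivial bounds (Weil/completion for prime-like $q_i$) give at most $O(q_i^{1/2}\log q_i)$, still far above the $O(q_i^{1/k-\delta})$ you would need. Weyl's inequality does not help here: at the \emph{rational} point $p_i/q_i$ with $N^k$ only of size $q_i$, Weyl's bound is essentially trivial. So the block identity you write down cannot be established by the tools you invoke.

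The paper avoids this entirely, and the mechanism is instructive. It first Abel-transforms $F_k$ into $\sum_n S_n(x)/(n(n+1))$ with $S_n(x)=\sum_{l\le n}e(l^kx)$, and then blocks $n$ into $[q_i^{\tau},q_{i+1}^{\tau})$ with $\tau\ge 2$, so that throughout the block $n\ge q_i^{2}$. For $n\le |\beta|^{-1/k}$ with $\beta=x-p_i/q_i$ it writes $e(l^kx)=e(l^k p_i/q_i)e(l^k\beta)$ and applies partial summation \emph{inside} $S_n$: the partial sums $\sum_{t\le l}e(t^kp_i/q_i)$ differ from $(l/q_i)\xi_{p_i/q_i}$ by at most the trivial $q_i$, but now $q_i\le n^{1/2}$, so this contributes $\sum_n n^{1/2}/n^2<\infty$. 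For the remaining $n$ in the block it bounds $S_n(x)$ directly by Weyl-type estimates (Lemma~A) that exploit the \emph{irrationality} of $x$; this is precisely what is lost once you freeze the phase at $p_i/q_i$. In short, the paper's choice of wide blocks $n\ge q_i^{2}$ makes the crude $O(q_i)$ incomplete-sum error harmless, whereas your narrow blocks $n\ge q_i^{1/k}$ force you to demand incomplete-sum cancellation that is not known for $k\ge 3$.
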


However, Riemann type series, despite being divergent at a dense set of points, are known to belong to the space BMO$(\mathbb{T})$, i.e. the space of $1$-periodic functions $h$ such that
\begin{align*}
\|h\|_*:=\sup_{I\subset \mathbb{T}}{\|h\|_I}<\infty,\qquad \|h\|_I=|I|^{-1}\int_I|h-h_I|,
\end{align*}
where $I$ stands for an interval and $h_I$, for the average of $h$ on $I$. Moreover, it can be seen from \cite[Th. 1]{ChCU} that BMO-norms of $F_k$ are uniformly bounded in $k$ and for any sequence $\{b_k\}_{k=1}^{\infty}$ such that $|b_k|\leq 1$ for all $k$ BMO-norm of the function $\sum_{n=1}^{\infty}b_k n^{-1} e(n^k x)$ is bounded by the same constant. It also follows from \cite[Prop. 1]{ChCU} that there holds the inequality
\begin{align*}
    \limsup_{|I|\to 0}\bigg\|\sum_{n=1}^{\infty}\frac{b_k e(n^k x)}{n}\bigg\|_I\leq \frac{3(12\pi)^{1/3}}{k}.
\end{align*}

It is worth mentioning that the partial sums of the series $F_k$ do not converge to its sum in the metric of BMO. This can be seen from the following
\begin{prop}\label{new}
There exists a constant $C>0$ such that for any $m,k\in\mathbb{Z}^+,\;k>1,$ there holds
\begin{align*}
    \Big\|\sum_{n=m+1}^{\infty}\frac{e(n^kx)}{n}\Big\|_*\geq \frac{C}{k}.
\end{align*}
\end{prop}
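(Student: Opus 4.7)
The plan is to reduce the proposition to a local lower bound on the mean oscillation of $F_k$ near $0$ and then to derive that bound from the asymptotic behaviour of $F_k(x)$ as $x\to 0^+$. Set $P_m(x):=\sum_{n=1}^{m} e(n^k x)/n$, so that $g_m:=F_k-P_m$ is the function in the proposition. As $P_m$ is a trigonometric polynomial, it is continuous, and therefore $\|P_m\|_I\to 0$ as $|I|\to 0$. The triangle inequality for $\|\cdot\|_I$ gives
\begin{align*}
\|g_m\|_*\geq\limsup_{|I|\to 0}\|g_m\|_I\geq\limsup_{|I|\to 0}\bigl(\|F_k\|_I-\|P_m\|_I\bigr)=\limsup_{|I|\to 0}\|F_k\|_I,
\end{align*}
so it suffices to produce an absolute constant $C>0$ and a sequence of intervals $I_\ell$ with $|I_\ell|\to 0$ such that $\|F_k\|_{I_\ell}\geq C/k$.

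To find such intervals I would analyse $F_k$ near the origin. Splitting at $N:=\lfloor x^{-1/k}\rfloor$, the Taylor expansion $e(n^k x)=1+O(n^k x)$ for $n\leq N$ together with $\sum_{n\leq N}n^{k-1}\sim N^k/k=1/(kx)$ yields
\begin{align*}
\sum_{n\leq N}\frac{e(n^k x)}{n}=\log N+\gamma+\sum_{n\leq N}\frac{e(n^k x)-1}{n}=\frac{1}{k}\log\frac{1}{x}+\gamma+O(1/k),
\end{align*}
while the contribution of $n>N$ is $O(1/k)$ via comparison with the oscillatory integral $\int_N^{\infty}e(t^k x)/t\,dt=(1/k)\int_{1}^{\infty}e(u)/u\,du$ (substitution $u=t^k x$) and summation by parts against van der Corput estimates for $\sum_{n\leq M}e(n^k x)$. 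The upshot is
\begin{align*}
F_k(x)=\frac{1}{k}\log\frac{1}{x}+R_k(x)
\end{align*}
for small $x$ away from the countable set of divergence points of $F_k$, and the same kind of splitting applied to $F_k(x_1)-F_k(x_2)$ shows directly that there $R_k$ is Lipschitz with constant $O(1/k)$.

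Now take $I_\ell=[x_\ell,2x_\ell]$ with $x_\ell\to 0$ chosen away from the singular set. The substitution $x=x_\ell y$ shows that $(1/k)\log(1/x)$ has mean oscillation exactly $c_0/k$ on $I_\ell$, where $c_0:=\int_1^2|\log y-(2\log 2-1)|\,dy>0$ is an absolute constant. The Lipschitz bound gives $\|R_k\|_{I_\ell}\leq |I_\ell|\cdot O(1/k)=x_\ell\cdot O(1/k)\to 0$, which is negligible compared with $c_0/k$. Hence $\|F_k\|_{I_\ell}\geq c_0/(2k)$ for all sufficiently large $\ell$, and the first paragraph completes the argument with $C=c_0/2$.

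The decisive difficulty is the Lipschitz estimate on $R_k$. Termwise differentiation of $F_k$ gives a divergent series, so $R_k(x_1)-R_k(x_2)$ must be controlled directly by Abel summation and oscillatory bounds on $\sum_{n\leq M}e(n^k x_i)$, with the $1/k$ factor arising from the change of variables $u=t^k x$ carefully preserved throughout; losing this factor at any stage would reduce the remainder to $O(1)$ and drown the main $(1/k)\log(1/x)$ term that we wish to detect.
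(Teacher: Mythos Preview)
Your reduction in the first paragraph is correct and clean: since $P_m$ is a trigonometric polynomial, $\|P_m\|_I\to 0$ as $|I|\to 0$, so it suffices to show $\limsup_{|I|\to 0}\|F_k\|_I\ge C/k$. The computation that $\tfrac{1}{k}\log(1/x)$ has mean oscillation exactly $c_0/k$ on $[x_\ell,2x_\ell]$ is also fine.

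The gap is the Lipschitz claim for $R_k=F_k-\tfrac{1}{k}\log(1/x)$, and it is not a technical detail but a genuine obstruction. Subtracting $\tfrac{1}{k}\log(1/x)$ removes only the singularity of $F_k$ at $0$; it leaves intact the logarithmic singularities that $F_k$ has at every rational $a/q$ with $\xi_{a/q}\ne 0$. For instance $1/p^k$ (with $p$ prime, $p\nmid k$) lies in every $I_\ell$ once $p$ is large enough, and near such a point the representation in Theorem~\ref{thm2} gives $F_k(x)\sim \tfrac{1}{kp}\log\tfrac{1}{|x-1/p^k|}$, so $R_k$ is not even locally bounded on $I_\ell$, let alone Lipschitz with a constant independent of $\ell$. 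Your proposed fix---controlling $R_k(x_1)-R_k(x_2)$ by Abel summation against bounds for $\sum_{n\le M}e(n^kx)$---cannot yield a uniform estimate on $I_\ell$, because Weyl--van der Corput bounds for this sum depend on the Diophantine type of $x$, and every $I_\ell$ contains points $x$ arbitrarily close to such rationals, where the exponential sum is genuinely large. What would actually suffice is the weaker statement $\|R_k\|_{I_\ell}=o(1/k)$; this is plausible (the secondary singularities in $I_\ell$ all have strength $|\xi_{a/q}|/(kq)\lesssim q^{-1+1/k}/k$ with $q\gtrsim 1/x_\ell$), but proving it amounts to controlling the full continued-fraction expansion of every $x\in I_\ell$---essentially the content of Proposition~\ref{prop}---and is not delivered by the sketch you give.

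The paper sidesteps all pointwise analysis by working on the Fourier side. It uses the Fefferman/Sledd--Stegenga fact that for $\sum_n a_ne(nx)$ with $a_n\ge 0$ the BMO norm is comparable to $\bigl(\sup_N\sum_{j\ge 1}(\sum_{jN\le n<(j+1)N}a_n)^2\bigr)^{1/2}$, applies it to the sequence $a_l=1/n$ for $l=n^k$ (with $n>m$), and shows by an elementary counting of $k$-th powers in arithmetic progressions of length $N\to\infty$ that this block quantity is $\gtrsim 1/k$. This gives the bound uniformly in $m$ with no appeal to the behaviour of $F_k$ at any point.
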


One of the fundamental results for functions from BMO is the John-Nirenberg inequality \cite{JN} (see also \cite{Ga}).
\begin{oldassertion}(John, Nirenberg, 1961) 
There exist constants $C_1$ and $C_2$ such that for any function $h\in \text{BMO}$ on any interval $I$ there holds
\begin{align*}
|\{x\in I : |h(x)-h_I|>\lambda\}|\leq C_1|I|e^{-\frac{C_2\lambda}{\|h\|_*}}.
\end{align*}
\end{oldassertion}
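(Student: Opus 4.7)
The plan is to prove the estimate by the classical Calderón–Zygmund stopping time argument. After a harmless translation one may assume $h_I=0$, and by homogeneity one may further assume $\|h\|_*=1$; the task then reduces to showing $|\{x\in I:|h(x)|>\lambda\}|\leq C_1|I|e^{-C_2\lambda}$ for absolute constants $C_1,C_2>0$.

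Fix a threshold $\alpha>1$ to be chosen later. On $I$ I would perform a dyadic stopping time decomposition: subdivide $I$ into halves, then halves of halves, and stop at the maximal dyadic subintervals $\{I_j^{(1)}\}$ on which the average of $|h|$ exceeds $\alpha$. Two standard consequences are essential. First, the parent $\tilde I_j^{(1)}$ of each stopping interval was not selected, so its average of $|h|$ is at most $\alpha$; combined with BMO control on $\tilde I_j^{(1)}$ this gives $|h_{I_j^{(1)}}|\leq\alpha+2\|h\|_*=\alpha+2$. Second, the Lebesgue differentiation theorem yields $|h(x)|\leq\alpha$ for a.e.\ $x\notin\bigcup_j I_j^{(1)}$. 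The key size bound
\[
\sum_j|I_j^{(1)}|\leq\frac{1}{\alpha}\int_I|h|\leq\frac{|I|}{\alpha}
\]
follows from the stopping criterion together with $\int_I|h|\leq\|h\|_*|I|=|I|$.

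Next I would iterate: on each $I_j^{(1)}$ run the same procedure for $h-h_{I_j^{(1)}}$, which still has BMO-norm at most $1$. This produces a second generation $\{I_{j,\ell}^{(2)}\}$ of total length at most $\alpha^{-2}|I|$ off which $|h|\leq 2(\alpha+2)$, and so on. After $N$ iterations, the set where $|h|>N(\alpha+2)$ has measure at most $\alpha^{-N}|I|$; choosing $N=\lfloor\lambda/(\alpha+2)\rfloor$ and optimizing $\alpha$ (e.g.\ $\alpha=e$) gives the desired exponential bound with $C_2=(\ln\alpha)/(\alpha+2)$ and $C_1=\alpha$. The argument presents no real obstacle; the only delicate point is the passage from the parent-average bound to the bound on $|h_{I_j^{(1)}}|$, which must invoke the BMO norm on the parent rather than a crude doubling estimate, so that $C_2$ does not degenerate. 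Once this is in place the iteration is mechanical.
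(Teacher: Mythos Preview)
Your argument is the standard Calder\'on--Zygmund stopping-time proof of the John--Nirenberg inequality and is correct as written; the only mildly sloppy point is that after the second generation you write $|h|\leq 2(\alpha+2)$ off the stopped intervals, whereas the sharper bound is $2\alpha+2$, but this does not affect the conclusion.

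However, there is nothing to compare against: the paper does not prove this statement. Theorem~A is quoted as a classical background result (with attribution to John and Nirenberg and references to \cite{JN} and \cite{Ga}) solely to motivate the paper's own Theorem~\ref{t}, which sharpens the exponential decay for the specific functions $F_k$. No proof of Theorem~A appears anywhere in the paper.
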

This inequality shows how much the function differs from bounded ones. Moreover, it implies that a function from BMO belongs to $L_{loc}^p$ for any $p\in(1,\infty)$, which means in our periodic case that $F_k\in L^p(\mathbb{T})$ for any natural $k$ and any $p\in(1,\infty)$. 

For the case $k=2$ the John-Nirenberg inequality was improved in \cite{ChCU} as follows.
\begin{oldassertion}(Chamizo, C\'ordoba, Ubis, 2018)\label{chamizo} There exists a constant $C$ such that for any interval $J\subset I=I_{p/q}$ consisting of all the points having $p/q$ as their convergent, for any $\lambda>0$, there holds
\begin{align}\label{cham}
|\{x\in J:|F_2(x)-(F_2)_J|>\lambda\}|\leq C|J|e^{-\lambda\sqrt{2q}}.
\end{align}
\end{oldassertion}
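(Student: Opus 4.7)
The plan is to reduce the claim to a sharp local BMO bound for $F_2$ on $I=I_{p/q}$ and then invoke the John--Nirenberg inequality. Heuristically, since $|\xi_{p/q}|=\sqrt{q}$ for $k=2$ (the classical quadratic Gauss sum), the local BMO seminorm of $F_2$ on $I_{p/q}$ should be of order $1/\sqrt{q}$ rather than $O(1)$, and $e^{-\lambda\sqrt{2q}}$ is exactly what John--Nirenberg produces when $\|F_2\|_{*,I}\sim 1/\sqrt{2q}$.

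For $x\in I_{p/q}$ set $y=x-p/q$, so $|y|\lesssim 1/q^{2}$. Splitting $n=aq+t$ with $0\le t<q$ and using $(aq+t)^{2}p\equiv t^{2}p\pmod{q}$ gives
\begin{equation*}
F_{2}(x)=\sum_{t=0}^{q-1}e\!\left(\tfrac{t^{2}p}{q}\right)G_{t}(y),\qquad G_{t}(y):=\sum_{\substack{n\ge 1\\ n\equiv t\,(q)}}\frac{e(n^{2}y)}{n}.
\end{equation*}
By Abel summation, or direct comparison with $\int u^{-1}e(u^{2}y)\,du$, each $G_{t}(y)$ in the range $|y|\le 1/q^{2}$ takes the form $\frac{1}{2q}\log(1/|y|)+C_{t}+O(1/q)$ with a $y$-independent constant $C_{t}$. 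Weighting by $e(t^{2}p/q)$ and summing in $t$, the leading logarithm decouples and becomes $\frac{\xi_{p/q}}{2q}\log(1/|y|)$, of magnitude $O\!\bigl(\frac{1}{\sqrt{q}}|\log|y||\bigr)$; the residual $y$-dependent error is $O(1/\sqrt{q})$ uniformly on $I$. Crucially, the constant $\sum_{t}C_{t}\,e(t^{2}p/q)$ does not depend on $y$ and is killed by the mean subtraction.

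Since $\log(1/|y|)$ has universal BMO seminorm on any interval, this decomposition yields $\|F_{2}\|_{*,I}\le C/\sqrt{q}$ with an explicit $C$, and hence, applying the local version of John--Nirenberg,
\begin{equation*}
|\{x\in J:|F_{2}(x)-(F_{2})_{J}|>\lambda\}|\le C|J|e^{-c\lambda\sqrt{q}}
\end{equation*}
for every $J\subset I$ and some absolute $c>0$.

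The main obstacle is upgrading the constant $c$ to the sharp value $\sqrt{2}$. A black-box John--Nirenberg loses numerical constants, so rather than invoking it I would mimic its Calderón--Zygmund stopping-time proof directly, using at each step the explicit leading term $\frac{\xi_{p/q}}{2q}\log(1/|y|)$ (not merely its BMO size) together with iteration over the continued-fraction hierarchy $I_{p_{i+1}/q_{i+1}}\subset I_{p_{i}/q_{i}}$: on each sub-scale the same structure theorem applies with $q$ replaced by $q_{i+1}$, producing a further $1/\sqrt{q_{i+1}}$ gain. The precise factor $\sqrt{2q}$ should then fall out of a careful bookkeeping of the Gauss-sum normalization $|\xi_{p/q}|=\sqrt{q}$ combined with the sharp BMO constant of the logarithm, and verifying that the cumulative error $R$ from the lower-order terms does not degrade this constant is the delicate part.
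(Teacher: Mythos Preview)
This statement is quoted in the paper as a result of Chamizo--C\'ordoba--Ubis \cite{ChCU} and is not re-proved here; the paper's own proof of the analogue Theorem~\ref{t} for $k\ge 3$ follows \cite{ChCU} closely, so the intended method can be read off from that argument.

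Your proposal has a genuine gap. The claim that, after subtracting $\frac{\xi_{p/q}}{2q}\log(1/|y|)$ and a constant, the remainder is $O(1/\sqrt{q})$ \emph{uniformly} on $I_{p/q}$ is false: $F_2$ diverges not only at $p/q$ but at a dense set of rationals inside $I_{p/q}$ (every $p'/q'$ that has $p/q$ as an earlier convergent and nonzero Gauss sum), and a single logarithmic singularity at $y=0$ cannot absorb all of these. Equivalently, the tail of your $G_t(y)$ beyond $n\sim|y|^{-1/2}$ is not $O(1/q)$ uniformly in $y$; its size depends on the rational approximations to $y$ itself. So the local BMO bound $\|F_2\|_{*,I}\le C/\sqrt{q}$ --- which is in fact true, being essentially equivalent to the theorem via the converse John--Nirenberg inequality --- does not follow from your decomposition, and in any case, as you note, black-box John--Nirenberg would not recover the sharp constant $\sqrt{2}$.

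The argument actually used (see the proof of Theorem~\ref{t} and \cite{ChCU}) never passes through a BMO bound. One first establishes the representation
\[
F_2(x)-(F_2)_I=\tfrac{1}{2}\sum_{q_j\ge q}\frac{\xi_{p_j/q_j}}{q_j}\ln^+\frac{q_{j+1}}{q_j}+O(1),
\]
a sum over \emph{all} later convergents of $x$ (the $k=2$ case of Proposition~\ref{prop}), with $|\xi_{p_j/q_j}|/q_j\le \sqrt{2}\,q_j^{-1/2}$. Then $I$ is partitioned into cylinder sets $I_b$ indexed by the next $d$ partial quotients $b=(b_1,\dots,b_d)$; the tail beyond $j_0+d$ is controlled on a set of measure $|I_b|(1-O(e^{-\lambda\sqrt{2q}}))$ by constraining the subsequent $a_{j+1}$, and the level-set estimate reduces to the purely combinatorial bound $\sum_{b\in\mathfrak{B}_\lambda}(b_1\cdots b_d)^{-2}\le e^{-\lambda\sqrt{2q}+O(1)}$, which is \cite[Lemma~2]{ChCU}. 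It is exactly in this lemma, together with $A(2)=\sqrt{2}$, that the sharp exponent $\sqrt{2q}$ appears. Your final paragraph gestures toward continued-fraction iteration, but the mechanism is a direct combinatorial sum over the partial quotients rather than a Calder\'on--Zygmund stopping-time recursion.
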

This result turns out to give the sharp order estimate for the intervals $J=I_{p/q}$ with $q$ divisible by $4$. This optimality is due to the fact that quadratic Gauss sums can be evaluated explicitly. For $k>2$, Gauss sums behave in a more complicated way, however, we prove the following analogue of Theorem \ref{chamizo}.

\begin{theorem}\label{t}
There exists a constant $C$ such that for $k\geq 3$ for any interval $J\subset I=I_{p/q}$ and for any $\lambda>0$ 
\begin{align}\label{tt}
|\{x\in J:|F_k(x)-(F_k)_J|>\lambda\}|\leq |J|e^{-\frac{k\lambda}{A(k)}q^{\frac{1}{k}}+Cq^{\frac{1}{k}-\frac{1}{2^k(k-1)}}\ln q},
\end{align}
\end{theorem}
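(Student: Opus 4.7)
The plan is to adapt the strategy of Chamizo--Córdoba--Ubis for $k=2$, substituting Weyl-type bounds on exponential sums of degree $k$ for the explicit evaluation of quadratic Gauss sums; the resulting loss accounts precisely for the exponent $1/(2^k(k-1))$ in the error term.

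On $J\subset I_{p/q}$ I would first split $F_k$ at the threshold $N\asymp q^{1/k}$. The low-frequency part $\sum_{n\leq N}n^{-1}e(n^kx)$ oscillates by at most $O(n^k|J|)\lesssim 1$ on $J$ and therefore only contributes to the ambient constant $A(k)$. For the tail $n>N$, Abel summation reduces the analysis to partial sums $S_M(x)=\sum_{n=1}^Me(n^kx)$. Writing $x=p/q+\beta$ for $x\in I_{p/q}$ and expanding $n=\ell q+r$ gives a decomposition of the shape
\[
S_M(x)=\frac{\xi_{p/q}}{q}\int_0^Me(\beta t^k)\,dt+\mathcal{E}(x),
\]
where $\mathcal{E}$ is controlled by Weyl's inequality on segments of length $q$, producing a factor of order $q^{1-1/(2^{k-1}(k-1))}\log q$. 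The main term carries the BMO-oscillation at the scale $A(k)/(kq^{1/k})$, which is what fixes the rate $q^{1/k}$ in the exponential in \eqref{tt}.

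To pass from these pointwise estimates to the distributional inequality, I would compute the $L^{2m}(J)$-moments of $F_k-(F_k)_J$. Orthogonality reduces each moment to a combinatorial count of representations as sums of $k$-th powers; the main term contributes roughly $\bigl(Cm^{1/k}A(k)/(kq^{1/k})\bigr)^{2m}$, while the Weyl contribution is damped by the additional factor $q^{-1/(2^k(k-1))}\log q$. Chebyshev's inequality with $m\asymp\lambda q^{1/k}$ then yields \eqref{tt}.

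The principal obstacle is uniform control of the Weyl error: for $k=2$ the sums $\xi_{p/q}$ are known exactly, but for $k\geq 3$ only Weyl-type estimates are available, which is what forces the logarithmic correction and the precise exponent $1/(2^k(k-1))$. A further subtlety is the interaction between the oscillation on $J$ and the full tower of convergents $\{p_j/q_j\}$ from Theorem \ref{thm2}; handling it cleanly will likely require a dyadic decomposition across the scales $q_i$, together with careful bookkeeping of the constant $A(k)$ through both the partial summation and the mod-$q$ expansion.
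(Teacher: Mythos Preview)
Your plan diverges from the paper's proof in a way that contains a genuine gap. The moment/Chebyshev step is the weak point: on a short interval $J\subset I_{p/q}$ the characters $e(n^kx)$ are \emph{not} orthogonal, so ``orthogonality reduces each moment to a combinatorial count of representations as sums of $k$-th powers'' has no obvious meaning, and there is no visible mechanism by which the parameter $q$ would enter the moment bound $\bigl(Cm^{1/k}A(k)/(kq^{1/k})\bigr)^{2m}$ you assert. Even if one could justify some moment bound of this general shape, optimizing in $m$ typically costs an absolute constant in the exponent, so you would not recover the sharp rate $k\lambda/A(k)$ in \eqref{tt}. A second gap is the range of validity of your approximation $S_M(x)\approx\frac{\xi_{p/q}}{q}\int_0^M e(\beta t^k)\,dt$: this needs $M^k|\beta|\lesssim 1$, i.e.\ $M\lesssim (q_iq_{i+1})^{1/k}$, whereas on $I_{p/q}$ with $k\geq 3$ one has only $|\beta|<q^{-2}$, so for most $x\in I_{p/q}$ the approximation breaks down long before the tail is exhausted. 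This is precisely the obstruction flagged in the introduction, and your Weyl error $q^{1-1/(2^{k-1}(k-1))}\log q$ on segments of length $q$ does not compensate for it.

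The paper handles both issues by a different route. First, Proposition~\ref{prop} (resting on the three-regime Weyl estimates of Lemma~A from \cite{O}, which go beyond the classical Weyl inequality) yields the pointwise representation
\[
F_k(x)-(F_k)_I=\frac{1}{k}\sum_{q_j\geq q}\frac{\xi_{p_j/q_j}}{q_j}\ln^+\frac{q_{j+1}}{q_j}+O\bigl(q^{-1/(2^k(k-1))}\ln q\bigr),
\]
so the oscillation is governed by the \emph{entire} tower of convergents, not just $p/q$; this is not a bookkeeping subtlety but the heart of the matter. Second, the passage to the distributional inequality does not use moments at all: one decomposes $I$ into sub-intervals $I_b$ indexed by the next $d$ partial quotients $b=(b_1,\dots,b_d)$, invokes the Gauss--Kuzmin type estimate $|I_b|\asymp |I|(b_1\cdots b_d)^{-2}$, and applies the combinatorial Lemma~2 of \cite{ChCU} to the inequality $\sum_{l\leq d}(q/q_{j_0+l-1})^{1/k}\ln b_l>\text{const}\cdot\lambda q^{1/k}$. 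The Banks--Shparlinski bound $|\xi_{p_j/q_j}|\leq A(k)q_j^{1-1/k}$ is what fixes the constant $k/A(k)$, and the $O$-term from Proposition~\ref{prop} is what produces the correction $Cq^{1/k-1/(2^k(k-1))}\ln q$. For general $J\subset I$ one then follows \cite[Prop.~3]{ChCU} almost verbatim.
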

where
$$A(k):=\sup_{q\geq 2}\;\max_{\gcd \;(a,q)=1}\Bigg|\sum_{x=0}^{q-1}e\left(\frac{ax^k}{q}\right)\Bigg|q^{\frac{1}{k}-1}.$$ 
According to \cite{BS}, $A(k)\leq 4.709236...$. It is worth mentioning that $A(2)=\sqrt{2}$, therefore, for $k=2$ we find $kA(k)^{-1}q^{\frac{1}{k}}=\sqrt{2q}$, that is, the main term in the exponent in \eqref{tt} coincides with the one in \eqref{cham} for $k=2$.

To prove Theorem \ref{chamizo}, an important property of the power $k=2$ was significantly used, namely, the fact that for all the points $x$ of the interval $I_{p/q}$ there holds $|x-p/q|<q^{-k}$ for $k=2$, but for $k>2$ the inequality can be true only on a set of a small measure. Thus, the value of $F_2(x)$ on the whole interval $I$ in a certain way is sufficiently close to that of $F_2(p/q)$, which is no longer true for $F_k,\;k>2$. 

In order to obtain suitable estimates for $k>2$ at points that are not sufficiently close to $p/q$, we use the estimates \cite{O} of Weyl sums, i.e. sums of the form $\sum_{k=1}^P e^{2\pi ih(k)}$, depending on rational approximations of the leading coefficient of a polynomial $h\in\mathbb{R}[x]$.

\section{Proofs of the results}

We first formulate the results \cite[Cor. 1-3, Rem. 7]{O} that we use to estimate Weyl sums.

\begin{oldlemma}\label{old} Let $0<y\in\mathbb{R}\setminus \mathbb{Q},\;\varepsilon\in(0,1),\;P\in\mathbb{N},\;3\leq n\in\mathbb{N}$ and let $f$ be a real monic polynomial of degree $n$. For numbers $C$ and $M$ define $\beta=\beta(y,C,M):=y-C/M$.

(a) If there is no pair of coprime numbers $C$ and $M\leq P^{\varepsilon}$ such that $|\beta|\leq P^{\varepsilon-1},$ then 
$$\bigg|\sum_{k=1}^P e^{\frac{2\pi if(k)y}{n!}}\bigg|\leq C_1 P^{1-\frac{\varepsilon}{2^n}}.$$

(b) If there exists a pair of coprime numbers $C$ and $M\leq P^{\varepsilon}$ such that
$P^{\varepsilon-n}< |\beta|\leq P^{\varepsilon-1},$ then 
\begin{align}\label{otbr}
\bigg|\sum_{k=1}^P e^{\frac{2\pi if(k) y}{n!}}\bigg|\leq C_2 \Big(P^{1-\frac{\varepsilon}{2^{n-1}}}+P^{1-\frac{n-\varepsilon}{2^{n-1}(n-1)}}\beta^{-\frac{1}{2^{n-1}(n-1)}}M^{-\frac{1}{2^n(n-1)}}\Big).
\end{align}

(c) If there exists a pair of coprime numbers $C$ and $M\leq P^{\varepsilon}$ such that 
$|\beta|\leq P^{\varepsilon-n},$ then
\begin{align*}
\bigg|\sum_{k=1}^P e^{\frac{2\pi if(k) y}{n!}}\bigg|\leq C_3\frac{P}{M^{\frac{1}{2^n(n-1)}}}.
\end{align*}
The constants $C_1,\;C_2$ depend on $n$ and $\varepsilon$, and the constant $C_3$, only on $n$.
\end{oldlemma}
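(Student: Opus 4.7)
The plan is to derive all three bounds from Weyl's differencing method applied $n-1$ times, followed by a case-dependent analysis of the resulting multiplicative exponential sum. Writing $S_P(y):=\sum_{k=1}^P e^{2\pi i f(k)y/n!}$, the standard Weyl step $|S|^2\leq P\sum_{|h|<P}|S_h|$, where $S_h$ has phase $f(k+h)-f(k)$ of degree $n-1$, is applied iteratively to reduce the polynomial phase to a linear one in the innermost variable. Since the iterated forward difference of the monic degree-$n$ polynomial $f$ has leading coefficient $n!\cdot h_1h_2\cdots h_{n-1}$, the normalization $y/n!$ is precisely what makes the innermost geometric sum involve $e^{2\pi ih_1\cdots h_{n-1}ky}$ without extraneous arithmetic factors. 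Summing the geometric progression yields
\begin{equation*}
|S_P(y)|^{2^{n-1}}\ll P^{2^{n-1}-n}\sum_{|h_1|,\dots,|h_{n-1}|\leq P}\min\!\bigl(P,\|h_1\cdots h_{n-1}\,y\|^{-1}\bigr)+P^{2^{n-1}-1},
\end{equation*}
so the task reduces to bounding this inner sum in each of the three diophantine regimes on $y$.

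For (a), applying Dirichlet's theorem to $y$ with parameter $P^{1-\varepsilon}$ yields coprime $C,M$ with $M\leq P^{1-\varepsilon}$ and $|y-C/M|\leq P^{\varepsilon-1}/M$; the hypothesis of (a) forces $M>P^\varepsilon$. The classical Vinogradov estimate $\sum_{m\leq N}\min(P,\|my\|^{-1})\ll(N/M+1)(P+M\log M)$, applied to $m=h_1\cdots h_{n-1}$ after grouping tuples with the same product (which introduces only divisor-function losses absorbable into $\varepsilon$), produces the exponent $1-\varepsilon/2^n$ after taking $2^{n-1}$-th roots. For (b) and (c), the approximation $y=C/M+\beta$ is prescribed, and I would sort the multi-indices $(h_1,\dots,h_{n-1})$ by the residue class modulo $M$ of $m:=h_1\cdots h_{n-1}$. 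When $M\mid m$, the fractional part $\|my\|$ reduces to $\|m\beta\|$ and the number of such tuples up to a given size is controlled by a divisor-sum bound, producing the crucial saving $M^{-1/(2^n(n-1))}$. When $M\nmid m$, the rational part contributes $\|mC/M\|^{-1}\asymp M/\gcd(m,M)$, while the perturbation contributes an additional $|m\beta|^{-1}$ whenever $|m\beta|>1$; combining these contributions over $|m|\leq P^{n-1}$ produces the $\beta^{-1/(2^{n-1}(n-1))}$ factor appearing in (b). In (c), the stronger hypothesis $|\beta|\leq P^{\varepsilon-n}$ ensures $|m\beta|<P^{\varepsilon-1}<1$ throughout the summation range, so the $\beta$-contribution drops out and only the $M$-saving survives, giving the clean bound $P/M^{1/(2^n(n-1))}$.

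The main obstacle will be the combinatorial counting of $(n-1)$-tuples $(h_1,\dots,h_{n-1})$ whose product has prescribed $\gcd$ with $M$, carried out uniformly in $M$ and without accruing $\log$-losses that would spoil the sharp exponents $1/(2^n(n-1))$ and $1/(2^{n-1}(n-1))$. A secondary subtlety is the dyadic split between the ranges $|m\beta|\leq 1$ and $|m\beta|>1$ in (b), which must be calibrated so that the two resulting contributions balance and reproduce both error terms of \eqref{otbr} exactly, rather than with a spurious extra power of $P^\varepsilon$.
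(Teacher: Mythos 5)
The paper does not prove this lemma at all: it is imported verbatim from \cite[Cor. 1--3, Rem. 7]{O}, so there is no internal proof to compare yours against. Judged on its own merits, your outline correctly identifies the natural starting point --- iterated Weyl differencing applied $n-1$ times, with the normalization $y/n!$ cancelling the $n!$ produced by the $(n-1)$-fold difference of a monic polynomial, reducing everything to $\sum\min\bigl(P,\|h_1\cdots h_{n-1}y\|^{-1}\bigr)$ --- and part (a) can plausibly be completed along these lines, since the target exponent $\varepsilon/2^{n}$ is strictly weaker than the $\varepsilon/2^{n-1}$ that the standard Weyl--Vinogradov completion gives, leaving room to absorb divisor-function and logarithmic losses.

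For (b) and (c), however, the sketch stops exactly where the work begins, and two features of the claimed bounds are incompatible with the strategy as described. First, the exponents carry an extra factor $(n-1)$ in the denominator ($\tfrac{1}{2^{n-1}(n-1)}$ and $\tfrac{1}{2^{n}(n-1)}$), which is the signature of an argument extracting the diophantine saving from a single difference variable (the remaining $h_i$ being handled by H\"older or trivially), not of the ``group tuples by the value of the product $m=h_1\cdots h_{n-1}$'' device you propose; your outline gives no mechanism that produces this $(n-1)$, nor the halving from $2^{n-1}$ to $2^{n}$ in the $M$-exponent. Second, and more seriously for (c): the constant $C_3$ must depend only on $n$, and $M$ may be bounded (even $M=2$), so a bound of the shape $PM^{-c}\log P$ or $P^{1+o(1)}M^{-c}$ --- which is what divisor-function grouping inevitably yields --- cannot be converted into $C_3PM^{-1/(2^{n}(n-1))}$. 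You flag both points yourself as ``obstacles,'' but flagging them is not overcoming them; as written the proposal is an announced plan rather than a proof, and precisely these steps are the content of the cited results in \cite{O}.
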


Roughly speaking, Lemma \ref{old} shows that the worse rational numbers approximate the leading coefficient the better estimate of the Weyl sum we obtain.

\begin{proof}[\it Proof of Proposition \ref{new}] It was proved by Fefferman in his unpublished work, using duality between the Hardy space $H^1$ and BMO, that for a series $\sum_{n=1}^{\infty}a_n e(nx),\;a_n\geq 0,$ the following expression is equivalent to its BMO-norm (see \cite{B,SS}):
\begin{align*}
    S(\{a_n\}):=\bigg(\sup_{N\in\mathbb{N}}\sum_{j=1}^{\infty}\Big(\sum_{n=jN}^{(j+1)N-1}a_n\Big)^2\bigg)^{\frac{1}{2}}.
\end{align*}
Considering for any $m,k\in\mathbb{Z}^+,\;k>1,$ the sequence
\begin{equation*}
    a_l:=\begin{cases}
        n^{-1}, & if\; l=n^k-m,\\
        0, &  \text{otherwise},
            \end{cases}
\end{equation*}
we will show that $S(\{a_n\})\geq C/k$ for some constant $C$, thus the proposition will be proved.

Note that for a fixed $k$ and for $j+1\leq \frac{(N/2k)^{\frac{k}{k-1}}-m}{N}=:J(k,m,N)$ we have

\begin{align*}
    ((j+1)N+m)^{\frac{1}{k}}-(jN+m)^{\frac{1}{k}}-2> \frac{N}{k}((j+1)N+m)^{\frac{1-k}{k}}-2\geq 0,
\end{align*}
which yields
\begin{align}\label{yields}
   \lceil((j+1)N+m)^{\frac{1}{k}}\rceil-\lceil(jN+m)^{\frac{1}{k}}\rceil\geq \frac{((j+1)N+m)^{\frac{1}{k}}-(jN+m)^{\frac{1}{k}}}{2}.
\end{align}

Taking into account \eqref{yields}, we derive

\begin{align*}
    &\sum_{j=1}^{\infty}\Big(\sum_{n=jN}^{(j+1)N-1}a_n\Big)^2=\sum_{j=1}^{\infty}\bigg(\sum_{n=\lceil (jN+m)^{\frac{1}{k}}\rceil}^{\lceil ((j+1)N+m)^{\frac{1}{k}}\rceil-1}\frac{1}{n}\bigg)^2\\ &\geq\sum_{j=1}^{\infty}\bigg(\Big(\lceil((j+1)N+m)^{\frac{1}{k}}\rceil-\lceil(jN+m)^{\frac{1}{k}}\rceil\Big)\frac{1}{((j+1)N+m)^{\frac{1}{k}}}\bigg)^2\\
    &\geq \frac{1}{4}\sum_{j=1}^{J(k,m,N)/2}\bigg(1-\Big(1-\frac{1}{j+1+\frac{m}{N}}\Big)^{\frac{1}{k}}\bigg)^2\geq \frac{1}{8k^2}\int\limits_{1}^{J(k,m,N)/2}\big(x+\frac{m}{n}\big)^{-2} dx \\
    &=\frac{1}{8k^2}\bigg(\frac{1}{1+\frac{m}{N}}-\frac{2}{J(k,m,N)}\bigg)\to\frac{1}{8k^2},
\end{align*}
as $N\to\infty$, which completes the proof.
\end{proof}

Turn now to the questions of representations of $F_k$ at an irrational point. Theorem \ref{thm2} follows immediately from

\begin{prop}\label{prop} Let $1<k\in\mathbb{N},\; x\in[0,1)\setminus\mathbb{Q}$ and $\{p_j/q_j\}$ be the convergents corresponding to the continued fraction of the number $x$. Then for any $\tau\geq 2$, for $q_i^{\tau}\leq m< q_{i+1}^{\tau}$, the following equality holds
\begin{align*}
\sum_{m\leq n <q_{i+1}^{\tau} }\frac{1}{n(n+1)}\sum_{l=1}^n e(l^k x)= \frac{\xi_{p_i/q_i}}{k q_i}\ln^+ \frac{q_iq_{i+1}}{m^k}+O\Big(q_i^{-\frac{1}{2^{k+1}}}+q_i^{-\frac{1}{2^{k}(k-1)}}\ln q_i\Big),
\end{align*}
where $\ln^+ y=\max\{\ln y, 0\}$.
\end{prop}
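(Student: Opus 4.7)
The plan is to split the sum at the natural threshold $N_0:=(q_iq_{i+1})^{1/k}$, above which the termwise approximation $e(l^kx)\approx e(l^kp_i/q_i)$ ceases to be effective (since $l^k|x-p_i/q_i|$ becomes of order one) and the exponential sum $S_n:=\sum_{l=1}^{n}e(l^kx)$ must instead be treated as a Weyl sum via Lemma \ref{old}. Set $\beta:=x-p_i/q_i$, so $|\beta|\leq 1/(q_iq_{i+1})$; since $\tau\geq 2$ forces $m\geq q_i^\tau\geq q_i$, every $n$ in the range satisfies $n\geq q_i$.

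For the low range $m\leq n\leq N_0$ (possibly empty, in which case $\ln^+=0$), the congruence $(jq_i+r)^k\equiv r^k\pmod{q_i}$ makes the sequence $l\mapsto e(l^kp_i/q_i)$ $q_i$-periodic with period sum $\xi_{p_i/q_i}$, so $\tilde S_n:=\sum_{l=1}^n e(l^kp_i/q_i)=(n/q_i)\xi_{p_i/q_i}+O(q_i^{1-1/k})$, using $|\xi_{p_i/q_i}|\leq A(k)q_i^{1-1/k}$. Weighting by $1/n(n+1)$ and summing produces $(\xi_{p_i/q_i}/q_i)\ln(N_0/m)+O(q_i^{-1/k})$, which via $\ln(N_0/m)=(1/k)\ln(q_iq_{i+1}/m^k)$ is exactly the principal term of the proposition. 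The difference $S_n-\tilde S_n=\sum_{l=1}^n e(l^kp_i/q_i)(e(l^k\beta)-1)$ is then handled by Abel summation against $\tilde S_l$, using $|\tilde S_l|\lesssim (l+q_i)q_i^{-1/k}$ together with $|e(l^k\beta)-e((l-1)^k\beta)|\lesssim l^{k-1}|\beta|$; after dividing by $n(n+1)$ and summing over the low range, the resulting contribution absorbs into $O(q_i^{-1/k})$.

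For the high range $N_0<n<q_{i+1}^\tau$, apply Lemma \ref{old} with $P=n$, $f(l)=l^k$ and $y=k!\,x$, so that the reduced denominator of $k!\,p_i/q_i$ is comparable to $q_i$ (bounding $\gcd(k!,q_i)$ uniformly, which for large $i$ is harmless). Here $|\beta|\leq 1/(q_iq_{i+1})\leq n^{-k}$ and $M\asymp q_i\leq n^\varepsilon$ for suitable $\varepsilon$, so cases (b) and (c) of the lemma apply depending on whether $|\beta|$ exceeds $n^{\varepsilon-k}$. Choosing $\varepsilon$ optimally to balance the two error exponents of \eqref{otbr} and inserting the worst-case $|\beta|$ gives a uniform bound $|S_n|\lesssim n^{1-1/2^{k+1}}+nq_i^{-1/(2^k(k-1))}$; weighting by $1/n(n+1)$ and summing over $N_0<n<q_{i+1}^\tau$ yields the two stated error contributions $O(q_i^{-1/2^{k+1}})$ and $O(q_i^{-1/(2^k(k-1))}\ln q_i)$, the logarithm coming from the harmonic tail $\sum 1/n$ over the high range.

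The principal obstacle is the fine case-analysis in the high range: one must partition it by the position of $|\beta|$ relative to the thresholds $n^{\varepsilon-k}$ and $n^{\varepsilon-1}$, apply cases (b) and (c) of Lemma \ref{old} separately, and tune $\varepsilon$ so that the two exponents in \eqref{otbr} balance to return exactly the savings $q_i^{-1/2^{k+1}}$ and $q_i^{-1/(2^k(k-1))}$ stated in the proposition, with the logarithmic factor tracked carefully against $q_i$ rather than $q_{i+1}$. The low-range error also requires exploiting cancellation in $\tilde S_l$ rather than relying on the termwise bound $|e(l^k\beta)-1|\leq 2\pi l^k|\beta|$, which at the upper end of the low range is only $O(1)$; Abel summation against the partial-sum estimate for $\tilde S_l$ is essential to extract the necessary $q_i^{-1/k}$ saving.
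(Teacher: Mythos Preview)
Your low-range argument has a genuine gap. When you approximate $S_n$ by $\tilde S_n=\sum_{l\le n}e(l^kp_i/q_i)$ and then bound $S_n-\tilde S_n$ by Abel summation against $\tilde S_l$, the dominant contribution comes from $|\tilde S_l|\asymp l\,q_i^{-1/k}$ paired with $|e(l^k\beta)-e((l-1)^k\beta)|\lesssim l^{k-1}|\beta|$, giving $|S_n-\tilde S_n|\lesssim q_i^{-1/k}|\beta|\sum_{l\le n}l^{k}\asymp n\,q_i^{-1/k}$ on the whole low range (since $n^k|\beta|\le 1$). After dividing by $n(n+1)$ this is $q_i^{-1/k}/n$, and summing over $m\le n\le N_0$ yields $q_i^{-1/k}\ln(N_0/m)=q_i^{-1/k}\cdot\frac1k\ln^+(q_iq_{i+1}/m^k)$, which is exactly the size of the main term, not $O(q_i^{-1/k})$. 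Since $\ln(q_iq_{i+1}/m^k)$ is unbounded in terms of $q_i$, the claimed absorption fails. (Incidentally, the bound $|\tilde S_l|\lesssim(l+q_i)q_i^{-1/k}$ asserts an incomplete Gauss-sum estimate that does not follow from $|\xi_{p_i/q_i}|\le A(k)q_i^{1-1/k}$; the trivial bound on a partial period is $q_i$. But even granting it, the computation above stands.)

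The paper repairs this by a different approximation: it compares $S_n$ not with $\tilde S_n$ but with $(\xi_{p_i/q_i}/q_i)\sum_{l\le n}e(l^k\beta)$, i.e.\ it subtracts the mean $\xi/q_i$ from $e(l^kp_i/q_i)$ \emph{before} Abel-summing. The centered partial sums $\sum_{t\le l}(e(t^kp_i/q_i)-\xi/q_i)$ are then uniformly $O(q_i)$ (they are periodic with zero period sum), so one gets $|S_n-(\xi/q_i)\sum_{l\le n}e(l^k\beta)|\lesssim q_i\le n^{1/\tau}\le n^{1/2}$, which after weighting sums to $O(m^{-1/2})=O(q_i^{-1})$. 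The sum $\sum_{m\le n\le N}\frac{1}{n(n+1)}\sum_{l\le n}e(l^k\beta)$ is then evaluated via the exponential integral $\mathrm{Ei}$ to produce $\frac1k\ln^+(q_iq_{i+1}/m^k)+O(1)$, and the $O(1)$ multiplied by $\xi/q_i$ is a genuine $O(q_i^{-1/k})$. Your high-range sketch is broadly along the right lines, though note that for $n>N_0$ one has $|\beta|>n^{-k}$ (not $\le$), and one must also cover the set of $n$ for which no admissible pair $(C,M)$ with $M\le n^\varepsilon$ exists, handled by case~(a) of Lemma~\ref{old}; the paper does this by fixing a single $\varepsilon\asymp 1/\tau$ and splitting the high range into four pieces $P_1,P_2,P_3,U$ according to the position of $|\beta|$ relative to $n^{\varepsilon-k},n^{\varepsilon-1}$.
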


\begin{proof}[\it Proof.]
Let $\tau\geq 2$ and $\varepsilon\in (0,\frac{1}{3})$ be such that 
$$\max\{(1-2\varepsilon)^{-1},(2\varepsilon)^{-1}\}\leq\tau<\varepsilon^{-1}$$ (for example, we can put $\varepsilon:=(2\tau)^{-1}$).
Assume that for some $n$ in the interval $q_i^{\tau}\leq n < q_{i+1}^{\tau}$ there exist coprime numbers $C$ and $M\leq n^{\varepsilon}$ such that
\begin{align}\label{u}
\Big|k!x-\frac{C}{M}\Big|\leq n^{\varepsilon-1}.
\end{align}
Then we have
$$|\beta|:=\Big|x-\frac{C}{Mk!}\Big|\leq \frac{n^{\varepsilon-1}}{k!}<\frac{1}{2(Mk!)^2},$$
since $n^{\varepsilon-1}=n^{3\varepsilon-1-2\varepsilon} \leq n^{3\varepsilon-1} M^{-2}<\frac{1}{2M^2k!}$ for sufficiently large $q_i$. Thus, by \cite[Th. 19]{Kh} $C/Mk!=p_j/q_j$ is a convergent of $x$ with $M\leq q_j\leq Mk!$. So, according to \cite[Th. 13]{Kh}, we infer that
$$\frac{1}{2Mk!q_{j+1}}\leq\frac{1}{2q_jq_{j+1}}<|\beta|\leq \frac{n^{\varepsilon-1}}{k!},$$
which gives for sufficiently large $q_i$
\begin{align*}
q_{j+1}^{\tau}>q_{j+1}\geq \bigg(\frac{n^{1-\varepsilon}}{2M}\bigg)^{\tau}\geq \bigg(\frac{n^{1-2\varepsilon}}{2}\bigg)^{\tau}\geq n>n^{1-\tau\varepsilon}\left(\frac{q_j}{k!}\right)^{\tau}>q_j^{\tau},
\end{align*}
whence $i=j$.

Let $N:=\lfloor |\beta|^{-\frac{1}{k}}\rfloor$. If we have $n\in[m,q_{i+1}^{\tau})$ such that $|\beta|\leq n^{-k}$, then $n\leq N$ and there holds
\begin{align}\label{appr}
\left|\sum_{l=1}^n e(l^k x)-\frac{\xi_{p_i/q_i}}{q_i}\sum_{l=1}^ne(l^k\beta)\right|&\leq \sum_{l=1}^{n-1}\Big |e(l^k\beta)-e\big((l+1)^k\beta\big)\Big | \bigg |\sum_{t=1}^l e\Big(\frac{p_i}{q_i}t^k\Big)-\frac{l\xi_{p_i/q_i}}{q_i}\bigg|\nonumber\\
+\big |e(n^k\beta)\big | \bigg |\sum_{t=1}^n e\Big(\frac{p_i}{q_i}t^k\Big)&-\frac{n\xi_{p_i/q_i}}{q_i}\bigg|\leq q_i\sum_{l=1}^{n-1} 4\pi k(l+1)^{k-1}|\beta|+q_i\nonumber\\
&\quad\quad\quad\quad\quad<8q_i\leq 8n^{\frac{1}{\tau}}\leq 8n^{\frac{1}{2}}.
\end{align}
At the same time

\begin{align}\label{int}
\sum_{m\leq n \leq N}\frac{1}{n(n+1)}\sum_{l=1}^ne(l^k\beta)=\sum_{m\leq n \leq N}\frac{e(n^k\beta)}{n}+O(1)=\int\limits_{m}^{N}\frac{e(y^k\beta)}{y}dy+O(1),
\end{align}
since for $m\leq y\leq N,c\in[0,1]$
\begin{align*}
\left|\frac{e((y-c)^k\beta)}{y-c}-\frac{e(y^k\beta)}{y}\right|\leq \frac{2\pi k y^{k-1}|\beta|}{y}+\frac{c}{y(y-c)}<\frac{3\pi k}{y^2}.
\end{align*}
The integral in \eqref{int} is equal to $\big(\text{Ei}(2\pi i|\beta|N^k)-\text{Ei}(2\pi i|\beta|m^k)\big)/k$ for $\beta>0$, otherwise, to its conjugate. Taking into account that for $y>0$  $$\text{Ei}(iy)=\min\{0,\ln y\}+O(1)$$ and that $2\pi|\beta|N^k>1$, we derive

\begin{align}\label{int2}
\int\limits_{m}^{N}\frac{e(y^k\beta)}{y}dy=\frac{1}{k}\ln^+|\beta|^{-1}m^{-k}+O(1)=\frac{1}{k}\ln^+\frac{q_{i+1}q_{i}}{m^k}+O(1).
\end{align}
Here we used the inequalities $(2q_iq_{i+1})^{-1}<|\beta|< (q_iq_{i+1})^{-1}$, the latter one is true due to \cite[Th. 9]{Kh}.

Inequality \eqref{u} can be true also for $n\in[m,q_{i+1}^{\tau})$ from the following intervals: $|\beta|^{-\frac{1}{k}}< n\leq |\beta|^{-\frac{1}{k-\delta}},\; |\beta|^{-\frac{1}{k-\delta}}< n\leq |\beta|^{-1},\;|\beta|^{-1}< n\leq |\beta|^{-\frac{1}{1-\varepsilon}}$, where $\delta:=k(1-\ln|\beta|/\ln M)^{-1}$. We denote these intervals respectively by $P_1,\;P_2$ and $P_3$. So, for $n\in P_2$, due to Lemma \ref{old}, (b), (where using the inequality
\begin{align*}
n^{\delta}=M^{\frac{k\ln n}{\ln M-\ln |\beta|}}\geq M^{\frac{k\ln n}{\ln n^{\varepsilon}+\ln n^{k-\delta}}}>M^{\frac{1}{2(k-1)}},
\end{align*} 
we discard the first term of \eqref{otbr}), we have

\begin{align*}
\bigg|\sum_{l=1}^n e(l^k x)\bigg|= O\Big(n^{1-\frac{k-\delta}{2^{k-1}(k-1)}} |\beta|^{-\frac{1}{2^{k-1}(k-1)}}M^{-\frac{1}{2^k(k-1)}}\Big).
\end{align*}
Hence

\begin{align}\label{2prom}
\sum_{n\in P_2}\frac{1}{n(n+1)}\bigg|\sum_{l=1}^n e(l^k x)\bigg|=O\Big( M^{-\frac{1}{2^k(k-1)}}\Big)=O\Big(q_i^{-\frac{1}{2^k(k-1)}}\Big).
\end{align}
Similarly, applying Lemma \ref{old}, (b), to $n\in P_3$ we obtain

\begin{align}\label{3prom}
\sum_{n\in P_3}\frac{1}{n(n+1)}\bigg|\sum_{l=1}^n e(l^k x)\bigg|= O\Big(q_i^{-\frac{1}{2^k(k-1)}}\Big).
\end{align}
Finally, according to Lemma \ref{old}, (c), using also the two-sided inequality $(2q_iq_{i+1})^{-1}<|\beta|< (q_iq_{i+1})^{-1}$, we see that

\begin{align}\label{1prom}
\sum_{n\in P_1}&\frac{1}{n(n+1)}\bigg|\sum_{l=1}^n e(l^k x)\bigg|\leq C\ln\frac{(2q_iq_{i+1})^{\frac{1}{k-\delta}}}{(q_iq_{i+1})^{\frac{1}{k}}} M^{-\frac{1}{2^k(k-1)}} \nonumber\\
&\leq C'\ln(q_iq_{i+1})^{\delta}M^{-\frac{1}{2^k(k-1)}}\leq C'\ln(q_iq_{i+1})^{\frac{k\ln M}{\ln M-\ln|\beta|}}M^{-\frac{1}{2^k(k-1)}}\nonumber\\
&\qquad\leq C'kM^{-\frac{1}{2^k(k-1)}}\ln M =O\Big(q_i^{-\frac{1}{2^k(k-1)}}\ln q_i\Big).
\end{align}
Now, the only case we did not consider yet is that of the numbers $n\in[m,q_{i+1}^{\tau})$ not satisfying \eqref{u}, the set of such numbers $n$ we denote by $U$. We use Lemma \ref{old}, (a),
to obtain

\begin{align}\label{3part}
\sum_{n\in U}\frac{1}{n(n+1)}\bigg|\sum_{l=1}^n e(l^k x)\bigg|= O \Big( (q_i^{\tau})^{-\frac{\varepsilon}{2^k}}\Big)=O\Big(q_i^{-\frac{1}{2^{k+1}}}\Big).
\end{align}

Summing up \eqref{appr} -- \eqref{3part}, we arrive at the needed equality.
\end{proof}

From now on we fix some $k>2$ and use the notation $F(x):=F_k(x)$.

\begin{proof}[\it Proof of Theorem \ref{t}.]
First we prove the theorem for $J=I$.

Let $F^-(x)=\sum_{n<q^{\tau}}\frac{1}{n(n+1)}\sum_{l=1}^n e(l^k x),\; F^+(x)=F(x)-F^-(x).$ Then for some point $x'\in I$
\begin{align}\label{F-}
&|F^-(x)-F^-_I|\leq \bigg|\sum_{q^{\frac{1}{k-\tau^{-1}}}\leq n<q^{\tau}}\frac{1}{n(n+1)}\sum_{l=1}^n e(l^k x)\bigg|\nonumber\\
&\qquad+\bigg|\sum_{q^{\frac{1}{k-\tau^{-1}}}\leq n<q^{\tau}}\frac{1}{n(n+1)}\sum_{l=1}^n e(l^k x')\bigg|+2\pi k|I|\sum_{n<q^{\frac{1}{k-\tau^{-1}}}}n^{k-2}\cdot n\nonumber\\
&\qquad\qquad\leq 2\max_{y\in I}\bigg|\sum_{q^{\frac{1}{k-\tau^{-1}}}\leq n<q^{\tau}}\frac{1}{n(n+1)}\sum_{l=1}^n e(l^k y)\bigg|+ O\Big(q^{-2}(q^{\frac{1}{k-\tau^{-1}}})^{k}\Big).
\end{align}
Due to \cite[Note for Th. 14]{korobov}, for $\varepsilon_1:=\tau^{-1}$, for $q^{\frac{1}{k-\tau^{-1}}}\leq n\leq q^{\tau}$ and $y\in I$, we have

\begin{align*}
\Big|\sum_{l=1}^n e(l^k y)\Big|\leq n^{1-\frac{\varepsilon_1}{2^k}}=n^{1-\frac{\varepsilon}{2^{k-1}}},
\end{align*}
whence
\begin{align}\label{F-}
|F^-(x)-F^-_I|=O\Big(q^{-\tau\varepsilon 2^{-k+1}}\Big)=O\Big(q^{-2^{-k}}\Big).
\end{align}
Likewise as in the proof of \cite[Th. 3]{ChCU},

\begin{align*}
F_I^+&=O\bigg(q_i^{-\frac{1}{2^{k+1}}}+q_i^{-\frac{1}{2^k(k-1)}}\ln q_i+\sum_{q_j\geq q}|I|^{-1}\int\limits_I q_j^{-\frac{1}{k}}(y)\ln a_{j+1}(y)dy\bigg)\nonumber\\
&\qquad\qquad\qquad=O\Big(q^{-\frac{1}{2^{k+1}}}+q^{-\frac{1}{2^k(k-1)}}\ln q\Big).
\end{align*}
Thus, using Proposition \ref{prop}, we infer that
\begin{align}\label{from_prop}
F(x)-F_I= \frac{1}{k}\sum_{q_j\geq q}\frac{\xi_{p_j/q_j}}{q_j}\ln ^+\frac{q_{j+1}}{q_j}+O\Big(q^{-\frac{1}{2^{k+1}}}+q^{-\frac{1}{2^k(k-1)}}\ln q\Big).
\end{align}
Here by \cite[Th. 1]{BS} we have
\begin{align}\label{Shp}
\Big| \frac{\xi_{p_j/q_j}}{q_j}\Big|\leq A(k) q_j^{-\frac{1}{k}}.
\end{align}

Now we follow the proof of \cite[Th. 3]{ChCU} but making some changes. For $b=(b_1,b_2,...,b_d)\in \mathbb{N}^d$, where $d>2$ is to be defined later, if $p/q=[0;a_1,...,a_{j_0}]$, then we denote by $I_b$ the interval $I_{p'/q'}$ corresponding to $p'/q'=[0;a_1,...,a_{j_0},b_1,...,b_d]$. Thus, $I=\cup_b I_b$. Taking into account that $q_{j+1}/q_j\leq a_{j+1}+1$, we obtain from \eqref{from_prop} and \eqref{Shp}

\begin{align}\label{a}
\frac{|F(x)-F_I|k}{A(k)}\leq \sum_{l=1}^dq_{j_0+l-1}^{-\frac{1}{k}}\ln b_l&+\sum_{n=1}^{\infty}q_{j_0+d+n-1}^{-\frac{1}{k}}\ln a_{j_0+d+n}\nonumber\\
&+O\Big(q^{-\frac{1}{2^{k+1}}}+q^{-\frac{1}{2^k(k-1)}}\ln q\Big).
\end{align}

Likewise as in the proof of \cite[Th. 3]{ChCU}, the measure of the set of points $x\in I_b$ such that $a_{j_0+d+n}(x)\leq Cn^2 e^{\lambda kA(k)^{-1}q^{\frac{1}{k}}}$ for all $n\in\mathbb{N}$ differs from $|I_b|$ in $O(|I_b|e^{-\lambda kA(k)^{-1}q^{\frac{1}{k}}})$ due to \cite[(18)]{ChCU}. Therefore, from now on we assume that these inequalities hold, so the second sum in \eqref{a} is $O(\lambda(q/q_{j_0+d})^{\frac{1}{k}})$. Similarly,

\begin{align}\label{again}
|\{x\in I:|F(x)-F_I|>\lambda\}|\leq |I|\sum_{b\in \mathfrak{B}_{\lambda}}(b_1...b_d)^{-2},
\end{align}
where $\mathfrak{B}_{\lambda}$ stands for the set of $b$ such that 
\begin{align}\label{b}
\frac{A(k)}{k}\sum_{l=1}^dq_{j_0+l-1}^{-\frac{1}{k}}\ln b_l+C\lambda\Big(\frac{q}{q_{j_0+d}}\Big)^{\frac{1}{k}}+Cq^{-\frac{1}{2^{k+1}}}+Cq^{-\frac{1}{2^k(k-1)}}\ln q>\lambda
\end{align}
for some constant $C$. Choosing $d$ so that there holds $2^{d/2k-1}>C$, we derive that the second term in \eqref{b} does not exceed $\lambda/2$. It is clear that for $k>2$ we can assume that $\lambda\big(q^{-\frac{1}{2^{k+1}}}+q^{-\frac{1}{2^k(k-1)}}\ln q\big)$ is greater than some fixed constant. Thus, the sum of the third and the fourth terms in \eqref{b} does not exceed $\lambda/4$. This means that we can find an index $l_0,\;1\leq l_0\leq d$ such that
\begin{align*}
\ln b_{l_0}\geq \frac{\lambda}{4d}\frac{k}{A(k)}q^{\frac{1}{k}},
\end{align*}
whence

\begin{align*}
\frac{q_{j_0+d}}{q}\geq b_{l_0}\geq e^{\frac{k\lambda}{4dA(k)}q^{\frac{1}{k}}}\geq \Big(\frac{k\lambda}{4dA(k)}q^{\frac{1}{k}}\Big)^k.
\end{align*} 
Hence, if $k>2$, \eqref{b} becomes
\begin{align*}
\frac{A(k)}{k}\sum_{l=1}^d\Big(\frac{q}{q_{j_0+l-1}}\Big)^{\frac{1}{k}}\ln b_l>\lambda q^{\frac{1}{k}}-3Cq^{\frac{1}{k}-\frac{1}{2^k(k-1)}}\ln q.
\end{align*}
Since $q/q_{j_0+l-1}<1$ for $l>1$, we can apply \cite[L. 2]{ChCU} and deduce, using \eqref{again},
\begin{align}\label{c}
|\{x\in I:|F(x)-F_I|>\lambda\}|&\leq |I|\Big(e^{\lambda q^{\frac{1}{k}}-3Cq^{\frac{1}{k}-\frac{1}{2^k(k-1)}}\ln q}\Big)^{-\frac{k}{A(k)}}\nonumber\\
&\qquad\qquad\leq|I|e^{-\frac{k\lambda}{A(k)}q^{\frac{1}{k}}+C'q^{\frac{1}{k}-\frac{1}{2^k(k-1)}}\ln q}.
\end{align}

Let us turn to an arbitrary interval $J\subset I$. For this case our proof is almost the same as the one of \cite[Prop. 3]{ChCU} (we keep using its notations substituting only $I$ with $J$), we just need to outline some modifications. Firstly, \cite[(23)]{ChCU} becomes

\begin{align*}
&|\{x\in J:|F(x)-F_J|>\lambda\}|\\
&\qquad\qquad=O\left(\sum_{b=c}^{d-1}|J^b|\min\big\{1,e^{k A(k)^{-1}(|F_{J^b}-F_J|-\lambda)q^{\frac{1}{k}}+C'q^{\frac{1}{k}-\frac{1}{2^k(k-1)}}\ln q}\big\}\right).
\end{align*}
Secondly, estimate \eqref{F-} remains true if we substitute $I$ with any interval containing in $I$, therefore, using Proposition \ref{prop},

\begin{align*}
&F_{J^b}-F_J=F_{J^b}^+-F_J^++O\Big(q^{\frac{1}{2^k(k-1)}}\ln q\Big)=\frac{\xi_{p/q}}{kq}\Big(\ln b-\big(\ln a_{j_0+1}(x)\big)_J\Big)\\
&\qquad+O\bigg(\sum_{q_j>q}\frac{1}{|J|}\int\limits_Jq_j^{-\frac{1}{k}}(y)\ln a_{j+1}(y)dy\bigg)+O\bigg(\sum_{q_j>q}\frac{1}{|J^b|}\int\limits_{J^b}q_j^{-\frac{1}{k}}(y)\ln a_{j+1}(y)dy\bigg)\\
&\qquad\qquad+O\left(q^{\frac{1}{2^k(k-1)}}\ln q\right)=\frac{\xi_{p/q}}{kq}\Big(\ln b-\big(\ln a_{j_0+1}(x)\big)_J\Big)+O\Big(q^{\frac{1}{2^k(k-1)}}\ln q\Big).
\end{align*}
Applying \cite[Th. 1]{BS} and the fact that $(\ln(a_{j_0+1}/c))_J=O(1)$ we have
\begin{align*}
|F_{J^b}-F_J|\leq \frac{A(k)q^{-\frac{1}{k}}}{k}\ln \frac{b}{c}+O\Big(q^{\frac{1}{2^k(k-1)}}\ln q\Big),
\end{align*}
which means that in our case $\mathfrak{B}$ is of a slightly different form:
\begin{align*}
\mathfrak{B}=\Big\{b\in[c,d-1]\cap \mathbb{Z}: \ln(b/c)\leq \lambda q^{\frac{1}{k}}kA(k)^{-1}-Cq^{\frac{1}{2^k(k-1)}}\ln q\Big\}.
\end{align*}
The rest of the proof is the same as that of \cite[Prop. 3]{ChCU}.

\end{proof}

\begin{acknowledgements}
I am grateful to Sergey Tikhonov for bringing attention to the problem in light of the recent results \cite{O} and for valuable suggestions during the preparation of the paper.
\end{acknowledgements}


\begin{thebibliography}{11}

\bibitem[1]{B}
Bergh J., {\it Functions of bounded mean oscillation and Hausdorff-Young type theorems}, Function Spaces and Applications, Springer Berlin Heidelberg, Berlin, Heidelberg, 1988, 130--136.

\bibitem[2]{BS}
Banks W., Shparlinski I., {\it On Gauss sums and the evaluation of Stechkin's constant}, Math. Comp., 85 (301)  (2016), 2569--2581.

\bibitem[3]{ChCU}
Chamizo F., C\'ordoba A., Ubis A., {\it Fourier series in BMO with number theoretical implications}, Math. Ann., 376  (2020), 457--473. 

\bibitem[4]{ChU}
Chamizo F., Ubis A., {\it Multifractal behavior of polynomial Fourier series}, Adv. Math., 250  (2014), 1--34.

\bibitem[5]{Ga}
Garnett J., {\it Bounded Analytic Functions}, Springer, New York, NY, 2007.

\bibitem[6]{G}
Gerver J., {\it The differentiability of the Riemann function at certain rational multiples of $\pi$}, Amer. J. Math., 92   (1970), 33--55.

\bibitem[7]{H}
Hardy G.H., {\it Weierstrass’s non-differentiable function}, Trans. Amer. Math. Soc., 17   (1916), 301--325.

\bibitem[8]{J}
Jaffard S., {\it The spectrum of singularities of Riemann’s function}, Rev. Mat. Iberoam., 12   (1996), 441--460.

\bibitem[9]{JN}
John F., Nirenberg L., {\it On functions of bounded mean oscillation}, Comm. Pure Appl. Math., 14  (1961), 415--426.

\bibitem[10]{Kh}
Khinchin A., {\it Continued fractions}, Dover Publications Inc., Mineola, NY, 1997. 

\bibitem[11]{korobov}
Korobov N., {\it Exponential sums and their applications}, Kluwer Academic Publishers, Dordrecht, Boston, 1992.

\bibitem[12]{O}
Oganesyan K.A., {\it Uniform convergence criterion for non-harmonic sine series}, Sb. Math., 212 (1)  (2021), 70--110.

\bibitem[13]{SS}
Sledd W. T., Stegenga D. A., {\it An $H^1$ multiplier theorem}, Ark. Mat. 19 (1--2)  (1981), 265--270.


\end{thebibliography}
\end{document}